\documentclass[12pt,reqno]{amsart}

\usepackage{booktabs}                                 
\usepackage[english]{babel}
\usepackage{latexsym}
\usepackage[utf8]{inputenc}
\usepackage{babel}
\usepackage{fancyhdr}
\usepackage{longtable}
\usepackage{mathrsfs}
\usepackage{geometry}
\usepackage{comment}
\usepackage{textcomp}
\usepackage{caption}
\usepackage{lmodern}
\usepackage{mathrsfs}
 \usepackage[T1]{fontenc}

\usepackage[all,cmtip]{xy}
\usepackage{epsfig}

\usepackage{amsmath,amsfonts,amssymb,amsthm}
\usepackage{graphics}
\usepackage{graphicx}
\usepackage{verbatim}
\usepackage{dsfont}
\usepackage{enumerate}  
\usepackage{pdflscape}
\usepackage{longtable}
\usepackage{booktabs}
\usepackage{colortbl}

\usepackage{tikz-cd}
\usepackage{ytableau}

\usepackage[shortlabels]{enumitem}
\usepackage[breaklinks]{hyperref} 
\hypersetup{
	colorlinks,
	linkcolor={red},
	citecolor={blue},
	urlcolor={red},
}
\usepackage{stmaryrd}
\usepackage{multirow}
\usepackage{longtable}
\usepackage{mathtools}

\usepackage{xcolor,colortbl}
\definecolor{lavender}{rgb}{0.9, 0.9, 0.98}

\usepackage[disable]
{todonotes}

\newcommand{\Z}{\mathbb{Z}}

\newcommand{\mQ}{\mathcal{Q}}

\newcommand{\mF}{\mathcal{F}}

\newcommand{\PP}{\mathbb{P}}

\newcommand{\KK}{\mathbb{K}}

\newcommand{\mE}{\mathcal{E}}
\newcommand{\mU}{\mathcal{U}}

\newcommand{\mZ}{\mathscr{Z}}
\newcommand{\of}{\mathcal{O}}

\newcommand{\bbGr}{\mathbb{G}\mathrm{r}}
\newcommand{\rk}{\mathrm{rk}}

\DeclareMathOperator{\D}{\textnormal{D}}

\DeclareMathOperator{\Gr}{Gr}

\DeclareMathOperator{\Bl}{Bl}

\DeclareMathOperator{\DC}{\textnormal{\textbf{D}}}

\newtheorem{thm}{Theorem}[section]
\newtheorem{corollary}[thm]{Corollary}

\newtheorem{lemma}[thm]{Lemma}
\newtheorem{proposition}[thm]{Proposition}

\newtheorem*{aim*}{Aim of this paper}

\theoremstyle{definition}
\newtheorem{definition}[thm]{Definition}
\newtheorem{rmk}[thm]{Remark}

\usepackage{thmtools}
\declaretheoremstyle[
spaceabove=1.5ex, spacebelow=1.5ex,
headfont=\bf,
notefont=\mdseries, notebraces={(}{)},
bodyfont=\normalfont,
headpunct=.,
numberwithin=,
postheadhook=\leavevmode%
qed={$\bullet$}
]{mystyle}

\usepackage[capitalise,nameinlink]{cleveref}
\crefname{mystyle}{Fano}{Fanos}
\crefname{claim}{Claim}{Claims}
\crefname{rmk}{Remark}{Remarks}
\crefname{workhyp}{WH}{WH}
\crefname{thm}{Theorem}{Theorems}
\crefname{proposition}{Proposition}{Propositions}
\crefname{app}{Appendix}{Appendices}

\crefname{lemma}{Lemma}{Lemmas}
\crefname{alg}{Algorithm}{Algorithms}
\crefname{ex}{Example}{Examples}
\makeatletter    
\def\l@subsection{\@tocline{1}{0,2pt}{2pc}{8mm}{\ \ }} 
\makeatletter    
\def\l@section{\@tocline{1}{0,2pt}{2pc}{8mm}{\ \ }} 

\newcommand{\crefpart}[2]{%
  \hyperref[#2]{\namecref{#1}~\labelcref*{#1}~(\ref*{#2})}%
}

\author{Federico Tufo}
\address{Dipartimento di Matematica \\
Universit\`a di Bologna\\
Piazza di Porta San Donato 5\\
40127 Bologna, Italy}
\email{federico.tufo96@gmail.com}

\title{On the Fano dimension of an Enriques surface}

\begin{document}
\begin{abstract}
   We construct a family of Fano fourfolds with the derived category of coherent sheaves of a general Enriques surface as semiorthogonal component. This improves a result of Kuznetsov, lowering the Fano dimension of a general Enriques surface from six to four.
\end{abstract}

\maketitle

\section{Introduction}

The Fano-visitor problem is a natural, yet elusive question in algebraic geometry and derived categories. First posed by Bondal in 2011, it amounts to asking if for any smooth projective variety $X$, there exists a smooth Fano variety $Y$ together with a full and faithful functor $\DC(X)\to\DC(Y)$. If the answer is positive $X$ is called Fano-visitor, and $Y$ is called Fano-host of $X$. This led, in \cite{KKLL17}, to the definition of the Fano dimension of a variety $X$, which is the minimal dimension of Fano-hosts of $X$.

Let us recap some key results on the topic. Bondal and Orlov in \cite{BO95} proved that the derived category of a hyperelliptic curve $X$ of genus $g$ can be embedded into the derived category of the intersection of two
quadrics in $\PP^{2g+1}$. Kuznetsov in \cite{Kuz10} proved that the derived categories of some K3 surfaces are embedded into special cubic fourfolds. Bernardara, Bolognesi, and Faenzi in \cite{BBF16} proved that every smooth plane curve is a Fano-visitor. Segal and Thomas in \cite{ST18} proved that a general quintic threefold is a Fano-visitor in an eleven-dimensional Fano-host. Finally, Kiem, Kim, H. Lee, and K. Lee, in \cite{KKLL17}, proved that all smooth projective complete intersections are Fano-visitors. 

In \cite{K18}, Kuznetsov first showed that a certain divisorial family in the moduli space of Enriques surfaces (so-called \emph{nodal Enriques}\footnote{Despite the name, these Enriques surfaces are smooth.}) can be realized as Fano-visitor for a Fano fourfold, which can be written as the blow-up of $\Gr(2,4)$ in the same Enriques surface. Then he showed that an Enriques surface general in moduli can be seen as a Fano-visitor for a six-dimensional Fano-host. In summary
\begin{itemize}
    \item if $S$ is a \emph{nodal Enriques} surface, then its Fano dimension is $\leq 4$;
    \item if $S$ is a general Enriques surface, then its Fano dimension is $\leq6$.
\end{itemize}

In our work, we improve this bound for general Enriques surfaces.
\begin{thm} [\cref{thm:main}, \cref{cor:main}]
    Let $S$ be a general Enriques surface. Then the Fano dimension of $S$ is $4$, i.e.\ there exists a Fano fourfold $Y$ with
    \[
        \DC(Y)=\langle\DC(S),E_1,\ldots,E_9\rangle
    \]
    where $E_1,\ldots,\ E_9$ are exceptional bundles. The Hodge diamond of $Y$ is diagonal, and $K_0(Y)$ contains a 2-torsion class; in particular, $\DC(Y)$ does not have a full exceptional collection.

    Such a Fano fourfold $Y$ can be realized as a complete intersection of multidegrees $(1,2,0), (1,0,2)$ in $\PP^2 \times \PP^2 \times \PP^2$. More precisely
    
    \[
        Y\cong\Bl_{S}(\PP^2\times\PP^2).
    \]
\end{thm}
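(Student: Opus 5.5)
Our plan is to realize a general Enriques surface $S$ as a codimension-two subvariety of $\PP^2\times\PP^2$ cut out as a degeneracy locus, so that the blow-up formula of Orlov gives the decomposition. Concretely, we use the classical description of a general Enriques surface via the Reye congruence/Verra-type construction: consider two $3\times 3$ matrices, or equivalently a map of vector bundles $\phi\colon \of(-1,-1)^{\oplus ?}\to\ldots$ on $\PP^2\times\PP^2$. More precisely, the complete intersection $Y\subset \PP^2_x\times\PP^2_y\times\PP^2_z$ of multidegrees $(1,2,0)$ and $(1,0,2)$ is linear in $x$. Writing the two equations as $\sum_i x_i q_i(y)=0$ and $\sum_i x_i r_i(z)=0$, the projection $Y\to \PP^2_y\times\PP^2_z$ has fibre over $(y,z)$ the set of $x$ in the kernel of the $2\times 3$ matrix $M(y,z)=\begin{pmatrix} q_0(y)&q_1(y)&q_2(y)\\ r_0(z)&r_1(z)&r_2(z)\end{pmatrix}$. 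Thus the fibre is a point where $M$ has rank $2$ and a $\PP^1$ on the rank-$\le1$ locus $S'=\{\rk M\le 1\}$, which has expected codimension $2$. Hence $Y\cong\Bl_{S'}(\PP^2\times\PP^2)$ as soon as $S'$ is smooth of codimension two (standard: $Y=\PP(\mathrm{coker}\,M^{\vee})$ and a rank-one degeneracy locus of a $2\times 3$ matrix of the right codimension gives the blow-up).

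The key steps, in order: (1) show that for general $q_i,r_i$ the locus $S'$ is a smooth surface; by Eagon--Northcott it is resolved by $0\to \of(-2,-2)^{\oplus 2}\cdot(\ldots)\to \of(-2,0)^{\oplus?}\ldots$, and from this compute $\chi(\of_{S'})=1$, $K_{S'}$ numerically trivial, $q=0$, $p_g=0$. (2) Identify $S'$ as an Enriques surface: the $q_i$ define a net of conics, equivalently $S'$ is the locus where $(q_i(y))$ and $(r_i(z))$ are proportional, i.e.\ the fibre product of the two maps $\PP^2\to\PP^5\supset$ … ; better, $S'$ is a $(2,2)$-type intersection related to Verra's description of Enriques surfaces as quotients of $(2,2)$-type K3s in $\PP^2\times\PP^2$ by an involution; I would exhibit $S'$ as the quotient of a K3 surface in the Veronese picture $v(\PP^2)\times v(\PP^2)\subset\PP^5\times\PP^5$ by the $\pm$ involution, and compare parameter counts ($10$ moduli) to conclude a general Enriques surface arises. (3) Check $Y$ is Fano: $-K_Y=\of(3,3,3)-\of(1,2,0)-\of(1,0,2)=\of(1,1,1)$, ample. (4) Apply Orlov's blow-up formula: $\DC(Y)=\langle \DC(S), \DC(\PP^2\times\PP^2)\rangle$, with $\DC(\PP^2\times\PP^2)$ having a full exceptional collection of $9$ line bundles, giving $E_1,\dots,E_9$. (5) Hodge diamond diagonal follows from the blow-up formula and $h^{p,q}(S)$ diagonal for Enriques; $K_0(Y)\supset K_0(S)$ contains the $2$-torsion $K_S$-class, which obstructs a full exceptional collection (whose $K_0$ would be free). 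Fano dimension exactly $4$ then needs a lower bound: a Fano of dimension $\le3$ hosting $S$ is excluded since $\DC(S)$ has nontrivial Hochschild/torsion invariants incompatible with Fano threefold $K_0$ being free—I would cite known classification results for this.

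The main obstacle is step (2): proving that the general degeneracy locus is an Enriques surface and that every general Enriques surface arises this way. I would first try to match with the known projective models (Reye congruences / Verra's double cover of $\PP^2\times\PP^2$), computing $2K_{S'}=0$ via adjunction on the Eagon--Northcott resolution, and then doing a dimension count of the family modulo $\mathrm{Aut}$, showing the moduli map is dominant by a tangent computation.
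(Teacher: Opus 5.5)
Your construction and bookkeeping match the paper's. You use the same complete intersection $Y$ and the same kernel-of-$M(y,z)$ description of the fibres, which gives $Y\cong\Bl_{S'}(\PP^2\times\PP^2)$; this is \cref{prop:degSwap}(iii), i.e.\ Kuznetsov's lemma. The remaining ingredients also agree: $-K_Y=\of(1,1,1)|_Y$, Orlov's formula, the Hodge numbers and the $2$-torsion in $K_0$.

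\textbf{Generality of $S'$.} The step you flag as the obstacle, that $S'=\D_1(\varphi)$ is a \emph{general} Enriques surface, is not proved in the paper either; it is imported from [OS20, Lemma 2.1] (cf.\ [K18, Lemma 3]). Two of your suggested routes to it would not work:
\begin{itemize}
\item Reye congruences are the \emph{nodal} Enriques surfaces. They form a divisor in moduli (Kuznetsov's $\Bl_S\Gr(2,4)$ case), so matching with them can never give the general surface.
\item The relevant K3 cover is not a surface in $\PP^2\times\PP^2$ or in $v(\PP^2)\times v(\PP^2)$. It is
\[
\tilde S=\{q_i(y)+r_i(z)=0,\ i=0,1,2\}\subset\PP(V_3\oplus V_3')\cong\PP^5,
\]
with the involution $\iota\colon(y,z)\mapsto(y,-z)$.
\end{itemize}
This $\iota$ is free, because base-point-free nets of conics keep $\tilde S$ off $\PP(V_3)\cup\PP(V_3')$. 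The map $(y,z)\mapsto([y],[z])$ identifies $\tilde S/\iota$ with $\{\rk M\le 1\}$: if $r(z)=c\,q(y)$, replace $z$ by $\lambda z$ with $\lambda^2=-1/c$. Classically, the K3 cover of a general Enriques surface is an $\iota$-invariant intersection of three quadrics of exactly this shape, so this gives generality directly. A count of $10$ moduli proves nothing without the Kodaira--Spencer computation you postpone.

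Your Eagon--Northcott step is sound. The Hilbert--Burch resolution $0\to\of(-4,-2)\oplus\of(-2,-4)\to\of(-2,-2)^{\oplus 3}\to\of\to\of_{S'}\to 0$ gives $\chi(\of_{S'})=1$ and $q=p_g=0$. It also gives $\omega_{S'}\cong\mathrm{coker}(\varphi)\otimes\of(-1,-1)\cong\of(-1,1)|_{S'}$. This is $2$-torsion, since $\of(2,0)|_{S'}$ and $\of(0,2)|_{S'}$ are both isomorphic to the image line bundle of $\varphi|_{S'}$.

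\textbf{Lower bound.} You cannot assume a Fano threefold has free $K_0$. Over $\mathbb{C}$, if $h^{1,2}(Y)\neq 0$, then $\mathrm{CH}^2(Y)\cong\mathrm{gr}^2K_0(Y)$ contains the torsion points of the intermediate Jacobian. These lift to torsion in $K_0(Y)$: correct a lift by $\chi$ times the point class. So the Hochschild step must come first, as in the paper. Additivity gives $12=\dim \mathrm{HH}_0(S)\le\dim\mathrm{HH}_0(Y)=\sum_p h^{p,p}(Y)=2+2\rho(Y)$, hence $\rho(Y)\ge 5$. The eight families with $\rho\ge 5$ all have full exceptional collections, so their $K_0$ is free and cannot contain the $2$-torsion of $K_0(S)$.
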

Kuznetsov's construction and ours are deeply intertwined. We describe this relation in \cref{section3}.

Moreover, in \cref{prFanoHost}, we prove a general result that solves the Fano-visitor problem for certain smooth degeneracy loci. 

\subsection*{Notation.} In this paper, all varieties are defined over a number field $\KK$ of characteristic zero. We denote a $\KK$-vector space of dimension $n$ as $V_n$. The Grassmannian $\Gr(k,n)$ is the variety parametrizing subspaces $V_k\subset V_n$. On $\Gr(k,n)$, we have the Euler sequence
\[
    0\to\mU_{\Gr(k,n)}\to\of_{\Gr(k,n)}\otimes V_n\to\mQ_{\Gr(k,n)}\to 0,
\]
where $\mU_{\Gr(k,n)}$ is the tautological bundle of rank $k$ and $\mQ_{\Gr(k,n)}$ is the quotient bundle of rank $n-k$, with ample determinant. 

Let $X$ be a variety, and let $\mathcal{E}$ be a vector bundle on $X$ of rank $e$. For every integer $k\leq e$, we denote $\bbGr_X(k,\mE)$ the Grassmann bundle of $\mE$, which parametrizes $k$-dimensional vector subspaces in the fibers of $\mE$. For $\bbGr_X(k,\mE)$ one has the relative Euler sequence
\[
    0\to\mU_{\mE}\to p^*\mE\to\mQ_{\mE}\to 0
\]
where $p$ is the natural projection $p\colon\bbGr_X(k,\mE)\to X$, and $\mU_{\mE}$ the relative tautological bundle for which $p_*\mU_{\mE}=\mE$.

Let $\mE$ and $\mF$ be two vector bundles on $X$ of rank $e$ and $f$ respectively. If $\varphi:\mE\to\mF$ is a morphism of vector bundles, we denote with $\D_r(\varphi)\subset X$ the locus where rank$(\varphi_x) \leq r$, which is the $(\min(e,f)-r)$-th degeneracy locus of $\varphi$. If $\mF$ is a globally generated vector bundle on a smooth variety $X$, we denote with $\mZ(\mF)\subset X$ the zero locus of a general global section of $\mF$ in $X$, and we denote with $V(\varphi)\subset X$ the zero locus of the global section $\varphi$. 

We denote with $\DC(X)$ the bounded derived category of coherent sheaves on $X$.

\subsection*{Acknowledgements.} The author wants to thank Enrico Fatighenti, Alexander Kuznetsov, Claudio Onorati, and Andrea Petracci for the precious comments and hints on the first draft of this work.
This research has been partially funded by the European Union - NextGenerationEU under the National Recovery and Resilience Plan (PNRR) - Mission 4 Education and research - Component 2 From research to business - Investment 1.1 Notice Prin 2022 - DD N. 104 del 2/2/2022, from title ``Symplectic varieties: their interplay with Fano manifolds and derived categories", proposal code 2022PEKYBJ – CUP J53D23003840006.
The author is a member of INDAM-GNSAGA.

\section{The geometric construction} \label[section]{section2}

\subsection*{Degeneracy loci.}
In order to construct our example, we first introduce some useful tools for the study of degeneracy loci of morphisms of vector bundles. Most of the results are classical, e.g.,  \cite{fulton,WJ,DFT,BFMT,FTT}. 

\begin{thm}\label{thm:ddl}\cite[Theorem 14.4]{fulton} Let $X$ be a smooth projective variety, let $\mathcal{E}$ and $\mathcal{F}$ vector bundles on $X$ such that $\mathcal{E}^{\vee}\otimes\mathcal{F}$ is globally generated, and let $\varphi\in H^0(X,\mathcal{E}^{\vee}\otimes\mathcal{F})$ a general global section. If $\D_r(\varphi)\neq\varnothing$, then the dimension of $\D_r(\varphi)$ is $m_r=dim\ X -(\rk\ \mathcal{E}-r)(\rk\ \mathcal{F}-r)$.
\end{thm}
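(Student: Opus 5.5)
The plan is the classical incidence-variety argument, which reduces the statement to a dimension count for a general section of a globally generated bundle.

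Write $e=\rk\mE$ and $f=\rk\mF$. Set $G=\bbGr_X(e-r,\mE)$ with projection $p\colon G\to X$ and relative tautological subbundle $\mU_{\mE}\subset p^*\mE$ of rank $e-r$. A point of $G$ over $x\in X$ is a subspace $K\subset\mE_x$ of dimension $e-r$. The section $\varphi$ pulls back to $p^*\varphi\colon p^*\mE\to p^*\mF$. Restricting it to $\mU_{\mE}$ gives a section
\[
\tilde\varphi\in H^0\bigl(G,\mU_{\mE}^\vee\otimes p^*\mF\bigr),
\]
whose zero locus $\widetilde{\D}:=V(\tilde\varphi)$ is the set of pairs $(x,K)$ with $K\subset\ker\varphi_x$. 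By construction $p(\widetilde{\D})=\D_r(\varphi)$.

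The first step is to show that $\mU_{\mE}^\vee\otimes p^*\mF$ is globally generated and that every such section arises from some $\varphi$. Restriction gives a surjection $p^*(\mE^\vee\otimes\mF)\to\mU_{\mE}^\vee\otimes p^*\mF$, because $p^*\mE^\vee\to\mU_{\mE}^\vee$ is surjective. Composing it with the evaluation map $H^0(X,\mE^\vee\otimes\mF)\otimes\of_G\to p^*(\mE^\vee\otimes\mF)$, which is surjective by hypothesis, shows that the image $W$ of $H^0(X,\mE^\vee\otimes\mF)$ in $H^0(G,\mU_{\mE}^\vee\otimes p^*\mF)$ generates the bundle. A Bertini/Kleiman-type transversality statement for a general section of a globally generating linear system $W$ then applies: the zero locus of a general $\tilde\varphi\in W$ is either empty or smooth of the expected codimension $(e-r)f$. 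Since $\varphi$ is general in $H^0(X,\mE^\vee\otimes\mF)$, its restriction $\tilde\varphi$ is general in $W$. Hence
\[
\dim\widetilde{\D}=\dim X+r(e-r)-(e-r)f=\dim X-(e-r)(f-r)-r^2+r(e-r)+r(f-r)-r(e-r)
\]
when nonempty. More usefully, I will stratify the count by exact rank rather than rely on the simplified expression.

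The remaining step is to compare dimensions along $p$, and I expect it to be the main obstacle, since $p|_{\widetilde{\D}}$ is not finite in general. I would avoid it by working over the open stratum $\D_r^\circ=\D_r(\varphi)\setminus\D_{r-1}(\varphi)$. On this stratum $\ker\varphi_x$ has dimension exactly $e-r$, so $p$ restricted to $p^{-1}(\D_r^\circ)\cap\widetilde{\D}$ is a bijection, indeed an isomorphism. Over $\D_{k}^\circ$ with $k<r$, the fibres are $\Gr(e-r,e-k)$.

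To keep the count clean, the uniform way to finish is to run the whole argument with the full flag-free incidence of pairs $(x,K)$ with $K=\ker$ exactly. Equivalently, one computes the codimension of the locus of maps of rank $\le r$ inside $\mathrm{Hom}(\mE_x,\mF_x)$, which is classically $(e-r)(f-r)$. Applying the transversality statement fibrewise gives the following. The subvariety $\Sigma_r\subset \mathrm{Tot}(\mE^\vee\otimes\mF)$ of maps of rank $\le r$ has codimension $(e-r)(f-r)$ in every fibre, and a general section of a globally generated bundle meets each stratum $\Sigma_k^\circ$ in the expected codimension $(e-k)(f-k)$ (Kleiman transversality applied to the surjective evaluation map $X\times H^0\to\mathrm{Tot}$). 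Therefore $\D_r(\varphi)=\varphi^{-1}(\Sigma_r)$ has dimension $\dim X-(e-r)(f-r)$ whenever it is nonempty. The lower strata have strictly smaller dimension, so they do not raise the total.

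The main care point is this transversality for the possibly non-proper stratification. I would handle it by applying generic smoothness to the second projection of the incidence $\{(x,s): s(x)\in\Sigma_k^\circ\}\subset X\times H^0$. This incidence is a fibration over $X$ with fibres of codimension $(e-k)(f-k)$ in $H^0$, because evaluation at $x$ is a surjective linear map.
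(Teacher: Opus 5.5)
The paper gives no proof of this statement; it is quoted from Fulton, so your argument has to stand on its own. Your final route, a dimension count on the incidence $\{(x,s)\}\subset X\times H^0$ (Kleiman-type transversality), is the standard one, and it does give the upper bound. For general $\varphi$ each stratum $\D_k^\circ(\varphi)$ is empty or of dimension $\dim X-(e-k)(f-k)$. These numbers increase strictly with $k$, so $\dim\D_r(\varphi)\le m_r$.

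Your first route already gave this bound. Indeed $\dim\widetilde{\D}=\dim X+r(e-r)-(e-r)f=\dim X-(e-r)(f-r)$ exactly; your displayed rewriting is off, since its extra terms sum to $rf-2r^2$. Since $p(\widetilde{\D})=\D_r(\varphi)$, the image has dimension at most $\dim\widetilde{\D}$ whether or not $p|_{\widetilde{\D}}$ is finite. So non-finiteness is not an obstacle in this direction.

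The gap is the lower bound. The sentence ``therefore $\D_r(\varphi)$ has dimension $m_r$ whenever it is nonempty'' does not follow from what precedes it. Your count shows that $\D_r^\circ(\varphi)$ is empty or of dimension $m_r$, and that the lower strata are smaller. It does not exclude $\D_r^\circ(\varphi)=\varnothing\neq\D_{r-1}(\varphi)$. In that case $\D_r(\varphi)=\D_{r-1}(\varphi)$ has dimension $<m_r$ and the statement fails. You need one more ingredient, and either of the following works.

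\emph{Option 1: the general codimension bound for degeneracy loci}, which is part of the cited Fulton theorem. Over a trivializing open $U$, write $\varphi$ as a matrix $s\colon U\to\mathrm{Hom}(\KK^e,\KK^f)$, and let $\Sigma_r$ be the cone of maps of rank $\le r$. Then $\D_r(\varphi)\cap U\cong\Gamma_s\cap(U\times\Sigma_r)$ inside the smooth variety $U\times\mathrm{Hom}(\KK^e,\KK^f)$. Since $\Sigma_r$ has codimension $(e-r)(f-r)$, the intersection inequality in a smooth ambient shows that every component of $\D_r(\varphi)$ has dimension at least $m_r$.

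\emph{Option 2: stay inside your incidence framework.} Let $N=\dim H^0(X,\mE^\vee\otimes\mF)$ and $I_{\le r}=\{(x,s):\rk(s(x))\le r\}$. Splitting the surjection $H^0\otimes\of_X\to\mE^\vee\otimes\mF$ locally shows that $I_{\le r}$ is, locally over $X$, isomorphic to $U\times\Sigma_r\times\KK^{N-ef}$. Hence it is irreducible, and the rank-exactly-$r$ locus $I_r$ is dense open in it. If $I_{\le r-1}\to H^0$ is not dominant, then $\D_{r-1}(\varphi)=\varnothing$ for general $\varphi$. If it is dominant, then so are $I_{\le r}\to H^0$ and its dense open part $I_r\to H^0$, so $\D_r^\circ(\varphi)\neq\varnothing$ for general $\varphi$.

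Either way, $\D_r(\varphi)\neq\varnothing$ forces $\dim\D_r(\varphi)=m_r$. Option 1 additionally shows that $\D_r(\varphi)$ is pure-dimensional.
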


With the above notation, let $p:\bbGr_X(k,\mE)\to X$ be the natural projection to $X$ and assume $k\leq\rk\ \mE$. By the projection formula, we have  
\begin{equation*}
    p_*(p^*\mF\otimes\mU^{\vee}_{\mE})\cong\mF\otimes p_*(\mU^{\vee}_{\mE})\cong\mF\otimes\mE^{\vee}.
\end{equation*}
Moreover, by the Leray spectral sequence, we obtain
\begin{equation}\label{congPD1}
    H^0(\bbGr(k,\mE),p^*\mF\otimes\mU^{\vee}_{\mE})\cong H^0(X,\mE^{\vee}\otimes\mF).
\end{equation}
In particular, for a general global section $\varphi\in H^{0}(X,\mE^{\vee}\otimes\mF)$, there exists a unique global section $\varphi_{\mE}\in H^0(\bbGr(k,\mE),p^*\mF\otimes\mU_{\mE}^{\vee})$ via the isomorphism (\ref{congPD1}). Notice that $\varphi_{\mE}$ is given by the composition $\mU_{\mE}\hookrightarrow p^*\mE\xrightarrow{p^*\varphi}p^*\mF$. Analogously, let $q:\Gr_X(k,\mF^{\vee})\to X$ is the natural projection on $X$, and assume $k\leq\rk\ \mF$. Then we have 
\begin{equation}\label{eq:isovb}
    H^0(\bbGr(k,\mF^{\vee}),q^*\mE^{\vee}\otimes\mU^{\vee}_{\mF^{\vee}})\cong H^0(X,\mE^{\vee}\otimes\mF).
\end{equation}
In particular, there exists a unique $\varphi_{\mF^{\vee}}\in H^0(\bbGr(k,\mF^{\vee}),q^*\mE^{\vee}\otimes\mU^{\vee}_{\mF^{\vee}})$ associated to $\varphi$, given by the composition $q^*\mE\xrightarrow{q^*\varphi}q^*\mF\twoheadrightarrow\mU^{\vee}_{\mF^{\vee}}$.

Here, we introduce a useful notion for what follows.

\begin{definition}
    Let $X$ be a variety. A stratified Grassmann bundle on $X$ with general fiber $\Gr(k,n)$ is a pair $(Y,p)$, where $Y$ is a variety and $p$ is a surjective morphism $p:Y\to X$ such that $X$ admits a stratification $X=\bigsqcup_iX_i$, for which the morphisms $p^{-1}(X_i)\to X_i$ are locally trivial $\Gr(k,n+i)$-bundles for all $i$. If $k=1$, we call it a stratified projective bundle.
\end{definition}

The next result is a combination of \cite[Lemma 2.1]{kuznetsovKuchle} and classical facts, but we provide a full proof because we were unable to find it in this form elsewhere.

\begin{lemma}\label[lemma]{prop:degSwap}
    Let $X$ be a smooth variety. Consider
    \[
        \varphi:\mathcal{E}\to\mathcal{F}
    \]
    a general morphism of vector bundles of $\rk\ \mE=e>f=\rk\ \mF$ on $X$ such that $\mathcal{E}^{\vee}\otimes\mathcal{F}$ is globally generated. Let $p:\bbGr_X(k,\mE)\to X$ be the natural projection and consider
    \[
        \varphi_{\mE}\in H^0(\bbGr_X(k,\mE),p^*\mF\otimes\mU_{\mE}^{\vee}),  
    \]
    the global section of $p^*\mF\otimes\mU_{\mE}^{\vee}$ defined as the composition
    \[
        \mU_{\mE}\hookrightarrow p^*\mE\xrightarrow{p^*\varphi}p^*\mF.
    \]
    Assume that $\D_{f-1}(\varphi)$ is non-empty. Then the following statements hold.
    \begin{enumerate}
        \item[i.] If $k=e-f$, the variety
        \[
            Y_1=V(\varphi_{\mE})\subset\bbGr_{X}(e-f,\mE)
        \]
        is birational to $X$ via the restriction of the natural projection $p$.
        Moreover, the exceptional locus of $p_{|Y}$ on $X$ is $D_{f-1}(\varphi)$.
        \item[ii.] If $k=1$, the variety
        \[
            Y_2=V(\varphi_{\mE})\subset\PP_X(\mE)
        \]
        is a stratified projective bundle on $X$ with general fiber $\PP^{e-f-1}$. 
        Let $q:\PP_X(\mF^{\vee})\to X$ be the natural projection, and consider
        \[
             \varphi_{\mF^{\vee}}\in H^0(\PP_X(\mF^{\vee}),q^*\mE^{\vee}\otimes\of_{\mF^{\vee}}(1)),
        \]
        the global section of $q^*\mE^{\vee}\otimes\of_{\mF^{\vee}}(1)$ obtained as the composition
        \[
            q^*\mE\xrightarrow{q^*\varphi}q^*\mF \twoheadrightarrow\of_{\mF^{\vee}}(1).
        \]
        Then, $\D_{f-1}(\varphi)$ is birational to
        \[
             Z=V(\varphi_{\mF^{\vee}})\subset\PP_X(\mF^{\vee}).
        \]
        \item[iii.] With the above notation, if $k=e-f$ and $f=e-1$, then $Y_1=Y_2$ and we call it $Y$. Then, 
        \[
            Y=\Bl_{\D_{f-1}(\varphi)}X,
        \]
        and $\D_{f-1}(\varphi)$ is birational to $Z$.
        
    \end{enumerate}
\end{lemma}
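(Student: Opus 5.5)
The plan is a fiberwise analysis of the zero loci over points $x\in X$, combined with generality and the dimension count of \cref{thm:ddl}. Since $e>f$ and $\varphi$ is general with $\mE^{\vee}\otimes\mF$ globally generated, \cref{thm:ddl} shows that $\D_{f-2}(\varphi)$ has codimension $2(e-f+2)$. Moreover, $\D_{f-1}(\varphi)\setminus\D_{f-2}(\varphi)$ is smooth of codimension $e-f+1$. More generally, $X$ is stratified by the loci $X_i=\D_{f-i}(\varphi)\setminus\D_{f-i-1}(\varphi)$, on which $\ker\varphi_x$ has constant dimension $e-f+i$.

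\emph{Fibers of $V(\varphi_{\mE})$.} The section $\varphi_{\mE}$ vanishes at a point $(x,U)$ exactly when $U\subset\ker\varphi_x$. Hence the fiber of $V(\varphi_{\mE})$ over $x\in X_i$ is $\Gr(k,\ker\varphi_x)\cong\Gr(k,e-f+i)$. Over each stratum the kernels form a vector bundle, since the kernel of a constant-rank map is locally free. So $p^{-1}(X_i)\cap V(\varphi_{\mE})\to X_i$ is a locally trivial Grassmann bundle.

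For $k=1$ this directly gives the stratified projective bundle of (ii), with general fiber $\PP^{e-f-1}$. For $k=e-f$ the fiber over $X_0$ is a single point, so $p|_{Y_1}$ is an isomorphism over $X_0=X\setminus\D_{f-1}(\varphi)$, and its fibers over $\D_{f-1}(\varphi)$ are positive dimensional. This gives (i), provided $Y_1$ is irreducible of dimension $\dim X$. To check this, I would use the fact that $p^*\mF\otimes\mU_{\mE}^{\vee}$ is globally generated on $\bbGr_X(k,\mE)$: it is a quotient of $p^*(\mE^{\vee}\otimes\mF)$, which is globally generated. A Bertini/Kleiman argument then shows $V(\varphi_{\mE})$ is smooth of the expected codimension $kf$, namely $\dim X+k(e-k)-kf=\dim X$ for $k=e-f$. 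Connectedness and irreducibility follow because the preimage of $X_0$ is dense: the dimension counts show that the preimages of the deeper strata have dimension strictly smaller than $\dim X$.

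\emph{The birationality with $Z$ in (ii).} The section $\varphi_{\mF^{\vee}}$ vanishes at $(x,\ell)$, where $\ell\subset\mF_x^{\vee}$ is a line, exactly when $\ell\subset\ker(\varphi_x^{\vee})=(\operatorname{coker}\varphi_x)^{\vee}$. Over $X_1$ the cokernel is one-dimensional, so $Z\to\D_{f-1}(\varphi)$ is an isomorphism over $X_1$. Its fibers over $X_i$ for $i\ge2$ are $\PP^{i-1}$. Again $\mE^{\vee}\otimes\of_{\mF^{\vee}}(1)$ is globally generated, so $Z$ is smooth of dimension $\dim X+f-1-e$. This equals $\dim\D_{f-1}(\varphi)$, and the higher strata contribute loci of dimension at most $\dim X-i(e-f+i)+i-1<\dim\D_{f-1}(\varphi)$, which gives birationality.

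\emph{Part (iii).} When $f=e-1$ we have $k=e-f=1$, so $Y_1=Y_2=Y$. Here $\D_{f-1}(\varphi)$ has codimension $2$, and over $X_1$ the fiber is $\PP^1$. To identify $Y$ with the blow-up, I would show that the ideal of $\D_{f-1}(\varphi)$ is generated by the $f\times f$ minors of $\varphi$, which are the components of $\Lambda^f\varphi\colon\Lambda^f\mE\to\det\mF$, i.e.\ a map $\mE\otimes\det\mE^{\vee}\otimes\det\mF\to\of_X$ since $\Lambda^{e-1}\mE\cong\mE^{\vee}\otimes\det\mE$ (dualized appropriately). The blow-up is therefore $\operatorname{Proj}$ of the Rees algebra of this ideal, which embeds into $\PP_X(\mE)$. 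Over $X_0$ the kernel line of $\varphi_x$ equals the point given by the cofactor vector, by Cramer's rule, so the blow-up and $Y$ agree on a dense open set. Both are irreducible and of the same dimension, and $Y$ is smooth as a zero locus. I would conclude by showing that the Rees algebra and $Y$ cut out the same subscheme: $Y$ is the symmetric algebra locus, the $f$ equations $\mU_{\mE}\to p^*\mF$, and for a determinantal ideal of codimension $2$ in Hilbert–Burch form the symmetric and Rees algebras coincide, since the ideal is of linear type generically. Alternatively, $Y\to X$ is birational with exceptional divisor $p^{-1}(\D_{f-1}(\varphi))$, a Cartier divisor that is a $\PP^1$-bundle over the smooth locus, and the universal property of the blow-up together with Zariski's main theorem gives the isomorphism.

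The final statement of (iii) follows from (ii). I expect the main obstacle to be this last identification, in particular handling the points of $\D_{f-2}(\varphi)$ if that locus is nonempty. In the intended applications it is empty by the dimension count, and I would add that hypothesis if needed.
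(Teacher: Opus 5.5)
Your proposal is correct in substance. One step in (iii) is justified by an invalid inference, but facts you already establish repair it.

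\textbf{Parts (i) and (ii).} You follow the paper: the same stratification $X_i=\D_{f-i}(\varphi)\setminus\D_{f-i-1}(\varphi)$, the same fibres $\Gr(k,\ker\varphi_x)$ and $\PP(\ker\varphi_x^{\vee})$, and the same dimension counts. These are $\dim X-i^2$ for $Y_1$, and a deficit of $(i-1)(e-f+i)$ below $\dim\D_{f-1}(\varphi)$ for $Z$. The one difference is how you get expected dimension. You use Bertini for the globally generated bundles $p^*\mF\otimes\mU_{\mE}^{\vee}$ and $q^*\mE^{\vee}\otimes\of_{\mF^{\vee}}(1)$, which also gives smoothness. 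The paper instead argues via Cohen--Macaulayness of a zero locus of expected codimension. Your codimension count $kf$ is the right one; the paper's ``rank $f$'' is correct only for $k=1$.

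\textbf{Part (iii) takes a different route.} The paper simply invokes \cite[Lemma 2.1]{kuznetsovKuchle}. You reprove that lemma by embedding $\Bl_{\D_{f-1}(\varphi)}X$ in $\PP_X(\mE)$ through the cofactor map $\mE^{\vee}\twoheadrightarrow I\otimes L$, with $L=\det\mE^{\vee}\otimes\det\mF$. This gives a self-contained replacement for the citation.

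\textbf{The flawed step.} Your closing claim, that $\operatorname{Sym}(I)=\operatorname{Rees}(I)$ because $I$ is ``of linear type generically'', is not a valid inference. Linear type must hold at every point of $\D_{f-1}(\varphi)$. The actual criterion for perfect codimension-two ideals is $G_\infty$, i.e.\ $\operatorname{codim}\D_{f-i}(\varphi)\ge i+1$ for all $i$, and this requires the bounds $i(i+1)$ from \cref{thm:ddl} on every deeper stratum.

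\textbf{The repair.} You need neither that criterion nor the hypothesis $\D_{f-2}(\varphi)=\varnothing$. Adding that hypothesis would only weaken the lemma as stated. Argue as follows.
\begin{enumerate}
\item The composite $\mF^{\vee}\xrightarrow{\varphi^{\vee}}\mE^{\vee}\to L$ vanishes; this is your Cramer's rule, $\varphi(c)=0$ for the cofactor vector $c$.
\item Hence $B=\Bl_{\D_{f-1}(\varphi)}X=\operatorname{Proj}\bigoplus_n I^n\otimes L^n$ is a closed subscheme of $Y=\operatorname{Proj}\operatorname{Sym}(\operatorname{coker}\varphi^{\vee})\subset\PP_X(\mE)$.
\item $Y$ is integral. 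It is smooth by your Bertini argument, and irreducible by your count $\dim\bigl(Y\cap p^{-1}(X_i)\bigr)=\dim X-i^2$, which covers every stratum, including those inside $\D_{f-2}(\varphi)$.
\item $B$ is integral of dimension $\dim X$.
\item A closed subscheme of an integral scheme of the same dimension is the whole scheme, so $B=Y$.
\end{enumerate}
Equivalently, both are integral and agree over $p^{-1}(X_0)$, so each is the scheme-theoretic closure of the same open piece.

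\textbf{Your alternative argument.} The route via the universal property and Zariski's main theorem needs more than you state.
\begin{itemize}
\item The universal property requires the ideal $I\cdot\of_Y$ to be invertible, not merely $p^{-1}(\D_{f-1}(\varphi))$ to be a divisor set-theoretically. It is invertible here, because on $Y$ the map $\Lambda^f p^*\varphi$ factors through the determinant of the induced map $p^*\mE/\of_{\mE}(-1)\to p^*\mF$ of rank-$f$ bundles.
\item Zariski's main theorem requires normality of the blow-up, which is not evident over $\D_{f-2}(\varphi)$.
\end{itemize}
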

\begin{proof}
    
        Let us start by proving $(i)$. By definition, $\bbGr_X(e-f,\mE)$ is the variety that parametrizes the pairs $(x,V_x)$, with $x\in X$ and $V_x$ a $(e-f)$-dimensional subspace of $\mE_x$. A point $y\in Y_1$ is a pair $(x,V_x)\in \bbGr_X(e-f,\mE)$, such that $\varphi_{\mE,x}(V_x)=0$. Notice that, by isomorphism (\ref{congPD1}), the points $x\in X$ for which $\varphi_{\mE,x}(V_x)=0$ are the points in $X$ for which $\varphi_{x}(V_x)=0$.
        Thus, the fiber of $p_{|Y_1}$ is the set of all the $V_x\subset\ker\ \varphi_x$ of dimension $e-f$ in $\ker\varphi_x$, which corresponds to the Grassmannian $\Gr(e-f,\dim(\ker\varphi_x))$. This implies that generically the fiber $p_{|Y_1}^{-1}(x)$ is a point, because generically, $\rk \ \varphi_x=f$, hence $\dim(\ker\varphi_x)=e-f$. Notice that $X$ has a stratification induced by  
        \[
            X=\D_{f}(\varphi)\supset\D_{f-1}(\varphi)\supset\D_{f-2}(\varphi)\supset\dots
        \]
        Then, over the $X_i=\D_{f-i}(\varphi)\setminus\D_{f-i-1}(\varphi)$, the map $p_{|Y1}:Y_1\to X$ is a locally trivial $\Gr(e-f,e-f+i)$-bundle. Therefore, the pair $(Y_1,p_{|Y_1})$ is a stratified Grassmann bundle with generic fiber a point on $X$.
        The dimension of the preimage of $X_i$ is
        \[
            \dim(X)-i(e-f+i)+i(e-f)=\dim(X)-i^2.
        \]
        Therefore the dimension of the induced stratification on $Y_1$ is less than $\dim(X)$, i.e.\ the dimension of the open stratum. Thus $\dim(Y_1)\leq\dim(X)$. Notice that $Y_1$ is the zero locus of a global section of a vector bundle of rank $f$ on a smooth variety $\bbGr_X(e-f,\mE)$ of dimension $\dim(X)+f$. It follows that $Y_1$ is Cohen--Macaulay and irreducible.
        In particular,
        \[
            p_{|Y_1}:Y_1\to X
        \]
        is a birational map, with exceptional locus $\D_{f-1}(\varphi)$.

        For $(ii)$, consider the stratification
        \[
            X=\D_{f}(\varphi)\supset\D_{f-1}(\varphi)\supset\D_{f-2}(\varphi)\supset\dots
        \]
        Then, over $X_i=\D_{f-i}(\varphi)\setminus\D_{f-i-1}(\varphi)$, the map $p_{|Y_2}:Y_2\to X$ is a locally trivial $\PP^{e-f+i-1}$-bundle. Therefore the pair $(Y_2,p_{|Y_2})$ is a stratified projective bundle with generic fiber $\PP^{e-f-1}$ on $X$. The dimension of the preimage of $X_i$ is 
        \[
            \dim(X)-i(e-f+i)+(e-f+i-1)=\dim(X)+(e-f-1)-i(e-f+i-1).
        \]
        For $i\geq 1$, one has $i(e-f+i-1)>1$, hence the dimension of the induced stratification on $Y_2$ is less than $\dim(X)+(e-f+1)$, i.e.\ the dimension of the open stratum. Thus, $\dim(Y_2)\leq\dim(X)+(e-f+1)$.
        On the other hand, $Y_2$ is the zero locus of a global section of a vector bundle of rank $f$ on the smooth variety $\PP_X(\mE)$ of dimension $\dim(X)+(e-1)$. It follows that $Y$ is Cohen--Macaulay and irreducible. 

        We now study $Z$. Using the same argument as above, we have a stratification of $X$ as $X=\bigsqcup X_i$, with $X_i=\D_{f-i}(\varphi)\setminus\D_{f-i-1}(\varphi)$. For $x\in X_i$, we have that $q_{|Z}^{-1}(x)$ is $\PP(\ker\varphi_x^T)$. In particular, if $x\in X_0$ then the fiber is empty, while if $x\in X_i$ for $i\geq1$, the fiber is $\PP^{i-1}$. Thus, the pair $(Z,q_{|Z})$ is a stratified projective bundle on $\D_{f-1}(\varphi)$, with general fiber a point. In particular, for $i\geq1$, the dimension of the preimage of $X_i$ is  
        \[
              \dim(X)-i(e-f+1)+(i-1)=\dim(\D_{f-1}(\varphi))+(1-i)(e-f+i).
        \]
        Hence, for $i\geq2$, the dimension of every stratum of the induced stratification of $Z$ is less than $\dim(\D_{f-1}(\varphi))$, therefore $Z\leq\D_{f-1}(\varphi)=\dim(X)-(e-f+1)$. On the other hand, $Z$ is the zero locus of a global section of a vector bundle of rank $e$ on a smooth variety $\PP_X(\mF^{\vee})$ of dimension $\dim(X)+f-1$, it follows that $Z$ is Cohen--Macaulay and irreducible of dimension $\dim(X)-(e-f+1)$. In particular, $Z$ is birational to $\D_{f-1}(\varphi)$.
       
        Finally, we prove $(iii)$. If $f=e-1$, we are in the hypothesis of \cite[Lemma 2.1]{kuznetsovKuchle}, so the first part follows. It remains to study $Z$. But this follows from point $(ii)$.
\end{proof}

As a consequence of the previous lemma, we can prove a result which solves the Fano-visitor problem for a significant class of examples.

\begin{proposition}\label[proposition]{prFanoHost}
  Let $X$ be a smooth variety. Let $\mE$, $\mF$ be vector bundles of ranks $\rk\ \mE=e>f=\rk\ \mF$ such that $\mE^{\vee}\otimes\mF$ is globally generated, and let $\varphi:\mE\to\mF$ be a general morphism of vector bundles. Let $p\colon\PP_X(\mE)\to X$ and $q\colon\PP_X(\mF^{\vee})\to X$  the natural projections, and consider
    \[
        \varphi_{\mE}\in H^0(\PP_X(\mE),p^*\mF\otimes\of_{\mE}(1)),\ \varphi_{\mF^{\vee}}\in H^0(\PP_X(\mF^{\vee}),q^*\mE^{\vee}\otimes\of_{\mF^{\vee}}(1))
    \] corresponding to $\varphi$.
    Define the varieties
    \[
        Y=V(\varphi_{\mE})\subset\PP_X(\mE), \ \  Z=V(\varphi_{\mF^{\vee}})\subset\PP_X(\mF^{\vee}).
    \]
 
    Suppose the following conditions hold:
    \begin{enumerate}
        \item[\textnormal{a.}] $\D_{f-2}(\varphi)=\emptyset$;
        \item[\textnormal{b.}] $p^*(K_X\otimes \det\mE\otimes \det\mF)\otimes\of_{\mE}(-e+f)$ is anti-ample;
    \end{enumerate}
    Then $Y$ is a Fano host for $Z$.
\end{proposition}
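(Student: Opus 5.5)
We need to show three things: $Y$ is smooth, $Y$ is Fano, and $\DC(Z)$ embeds fully faithfully into $\DC(Y)$. The natural tool for the embedding is the Cayley trick / homological projective duality for linear sections of projective bundles. This is Orlov's theorem on zero loci of sections of $\mathcal{F}\otimes\of(1)$ on a projective bundle, in the relative form used in \cite{kuznetsovKuchle}. It gives a semiorthogonal decomposition
\[
    \DC(Y)=\big\langle \DC(Z),\ p^*\DC(X)\otimes\of_{\mE}(1),\ \dots,\ p^*\DC(X)\otimes\of_{\mE}(e-f)\big\rangle ,
\]
valid when $Y$ and $Z$ have the expected dimensions $\dim X+e-f-1$ and $\dim X+f-1-e$ respectively.

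First I would check the dimensions and smoothness. By \cref{prop:degSwap}(ii), $Y$ and $Z$ are Cohen--Macaulay of the expected dimension. For smoothness I would use the global generation of $\mE^{\vee}\otimes\mF$. It implies that $p^*\mF\otimes\of_{\mE}(1)$ is globally generated on $\PP_X(\mE)$, since the evaluation map is surjective fiberwise; likewise $q^*\mE^{\vee}\otimes\of_{\mF^{\vee}}(1)$ is globally generated on $\PP_X(\mF^{\vee})$. A Bertini-type theorem for zero loci of general sections of globally generated bundles (Kleiman) then gives that $Y$ and $Z$ are smooth of the expected dimension, or empty. Condition (a) is used here as well: $\D_{f-2}(\varphi)=\emptyset$ means $Z\to \D_{f-1}(\varphi)$ is an isomorphism, with fibers $\PP(\ker\varphi_x^T)$ that are points. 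So $Z\cong\D_{f-1}(\varphi)$, and $Y$ is a stratified projective bundle with only two strata, with fibers $\PP^{e-f-1}$ and $\PP^{e-f}$.

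Next comes the Fano property, via adjunction. Write $\of(1)=\of_{\mE}(1)$. We have
\[
    K_{\PP_X(\mE)}=p^*(K_X\otimes\det\mE)\otimes\of(-e),
\]
for the convention in which $p_*\of(1)=\mE^{\vee}$, which matches the paper's convention for $\mU_{\mE}^{\vee}$. The normal bundle of $Y$ is $p^*\mF\otimes\of(1)$, with determinant $p^*\det\mF\otimes\of(f)$. Hence
\[
    K_Y=\big(p^*(K_X\otimes\det\mE\otimes\det\mF)\otimes\of(-e+f)\big)|_Y .
\]
By hypothesis (b) this bundle is anti-ample on $\PP_X(\mE)$, so its restriction to $Y$ is anti-ample, and $Y$ is Fano. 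I would double-check the sign conventions so that they match the statement exactly.

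Finally, the embedding. I apply Orlov's relative theorem to $\mathcal{W}=\mF$ and the section $\varphi_{\mE}$ of $p^*\mF\otimes\of(1)$. The universal hyperplane section construction identifies the "dual" locus with $Z=V(\varphi_{\mF^{\vee}})\subset\PP_X(\mF^{\vee})$, namely the points $(x,\lambda)$ with $\lambda\circ\varphi_x=0$. The theorem then yields the decomposition above, whose first component is $\DC(Z)$; this component is embedded by the Fourier--Mukai functor associated with the incidence correspondence. This is the Cayley trick in families. Its hypotheses are that both $Y$ and $Z$ have expected dimension (so the Koszul resolutions are exact) and are smooth, and these are exactly what the previous steps provide.

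The main obstacle I expect is checking these hypotheses of the relative Orlov/Kuznetsov theorem carefully. In particular one must ensure that the dimension estimate of \cref{prop:degSwap} for $Y$ is really the expected one, which requires $e-f\ge1$ together with condition (a). The Fano check is routine once the conventions are fixed.
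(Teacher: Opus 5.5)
Your route is the paper's route. You use \cref{prop:degSwap}(ii) together with (a) to get $Z\cong\D_{f-1}(\varphi)$. You use the relative Cayley trick to get $\DC(Y)=\langle\DC(Z),p^*\DC(X)\otimes\of_{\mE}(1),\ldots,p^*\DC(X)\otimes\of_{\mE}(e-f)\rangle$; the paper invokes Theorem 8.8 of Kuznetsov's HPD paper here, after checking $\dim(Z\times_X Y)=\dim X-1$. You then use adjunction for the Fano property. Your Kleiman--Bertini argument for smoothness of $Y$ and $Z$ is correct, since $H^0(X,\mE^\vee\otimes\mF)\otimes\of\to p^*(\mE^\vee\otimes\mF)\to p^*\mF\otimes\of_{\mE}(1)$ is surjective and sends $\varphi$ to $\varphi_{\mE}$. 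It supplies a step the paper leaves implicit.

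The step that fails is the one you postponed, and checking conventions will not fix it.

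\textbf{The convention is forced.} The statement requires $p_*\of_{\mE}(1)=\mE^{\vee}$; otherwise $\varphi\in H^0(X,\mE^\vee\otimes\mF)$ would not give a section of $p^*\mF\otimes\of_{\mE}(1)$. In that convention the relative Euler sequence $0\to\of\to p^*\mE\otimes\of_{\mE}(1)\to T_p\to 0$ gives $K_{\PP_X(\mE)}=p^*(K_X\otimes\det\mE^{\vee})\otimes\of_{\mE}(-e)$. The formula with $\det\mE$ belongs to the convention $p_*\of(1)=\mE$. Hence
\[
K_Y=\bigl(p^*(K_X\otimes\det\mE^{\vee}\otimes\det\mF)\otimes\of_{\mE}(f-e)\bigr)|_Y ,
\]
and hypothesis (b) as written does not control this bundle.

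\textbf{Counterexample to (b) as stated.} The paper's proof uses the same wrong formula, so this is a defect of hypothesis (b) itself, not of your strategy. Take $X=\PP^3$, $\mE=\of(-1)^{\oplus 5}$, $\mF=\of^{\oplus 4}$.
\begin{enumerate}
\item[(1)] $\mE^\vee\otimes\mF=\of(1)^{\oplus 20}$ is globally generated.
\item[(2)] $\D_2(\varphi)=\emptyset$ for general $\varphi$, since its expected codimension is $6$.
\item[(3)] The bundle in (b) is $\of(-10,-1)$ on $\PP_X(\mE)\cong\PP^3\times\PP^4$, which is anti-ample.
\item[(4)] $Y$ is cut out by four $(1,1)$-divisors, so $-K_Y=\of(0,1)|_Y$ is pulled back from $\PP^4$.
\item[(5)] The map $Y\to\PP^4$ has line fibres over the $20$ points where the induced $4\times 4$ matrix of linear forms on $\PP^4$ drops to rank $2$.
\end{enumerate}
So $Y=\Bl_{\D_3(\varphi)}\PP^3$ is only weak Fano.

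The correct hypothesis has $\det\mE^{\vee}$ in place of $\det\mE$. In the Enriques application $\mE=W_3\otimes\of_X$ is trivial, so nothing downstream is affected.
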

\begin{proof}
    By \cref{prop:degSwap}(ii), we have that $Y$ is a stratified projective bundle on $X$ with generic fiber $\PP^{e-f-1}$, which changes to a $\PP^{e-f}$ on $\D_{f-1}(\varphi)$. By \cref{prop:degSwap}(ii), $\D_{f-1}(\varphi)$ is birational to $Z$ and, since $\D_{f-2}(\varphi)$ is empty for (a), they are isomorphic.

    We want to prove that there exists an embedding $\DC(Z)\to\DC(Y)$. 
    Notice that on $\D_{f-1}(\varphi)$ the projection $p_{|Y}$ is a locally trivial $\PP^{e-f}$-bundle, hence \[\dim (Z\times_XY)=\dim(Z)+(e-f)=\dim(X)-1.\]  
    Thus we can apply \cite[Theorem 8.8]{Kuz07}, and obtain
    \[
        \DC(Y)=\langle\DC(Z),p^*\DC(X)\otimes\of_{\mE}(1),\ldots,p^*\DC(X)\otimes\of_{\mE}(e-f)\rangle.
    \]

    Note that the canonical bundle of $\PP_X(\mE)$ is
    \[
        K_{\PP_X(\mE)}=p^*(K_X\otimes\det\mE)\otimes\of_{\mE}(-e),
    \]
    with $p\colon\PP_X(\mE)\to X$ the natural projection.
    By the adjunction formula and condition (b), the anticanonical bundle of $Y$ is the restriction of an ample line bundle, hence is ample. Therefore, $Y$ is a Fano variety. 
\end{proof}

\begin{rmk}
    If $X$ is a smooth Fano variety with Picard rank one, for example a Grassmannian, then the condition (b) in \cref{prFanoHost} becomes a numerical condition. In fact, let $\iota_X$ be the index of $X$, and let $\det\mE\cong\of_X(\alpha)$, $\det\mF\cong\of_X(\beta)$. Then the condition (b) can be rewritten as: $\iota_X-\alpha-\beta>0$ and $f<e$.
\end{rmk}

We obtain in this way a new set of varieties for which the Fano-visitor problem has a positive answer. As an application, we show an improvement to the Fano dimension of a general Enriques surface.

\section{A general Enriques surface and its host} \label[section]{section3}

In this final section, we discuss two Fano-hosts for a general Enriques surface: the first one is six-dimensional and described by Kuznetsov in \cite{K18}; the second one is four-dimensional, and it is the main result of this paper.

We fix the following notation: let $V_3,\ V_3',\ W_3$ be 3-dimensional vector spaces, $X=\PP(V_3)\times\PP(V_3')$, $\mF=\of_{X}(2,0)\oplus\of_X(0,2)$, $\mE=\of_X\otimes W_3$, and $q\colon\PP_X(\mF^{\vee})\to X$ the natural projection to $X$. 
 
Recall from \cite[Lemma 2.1]{OS20} that a general Enriques surface can be described as the first degeneracy loci $D_1(\varphi)$  of a general morphism:
\[
    \varphi:\mE\to\mF.
\]

By isomorphism (\ref{eq:isovb}) 

\begin{equation*} 
    H^0(X,\mE^{\vee}\otimes\mF)\cong H^0(\PP_X(\mF^{\vee}),q^*\mE^{\vee}\otimes\of_{\mF^{\vee}}(1)),
\end{equation*} 

Denote with $\varphi_{\mF^{\vee}}$ the image of $\varphi$ via the isomorphism (\ref{eq:isovb}).

Hence, we can consider the zero locus $S=V(\varphi_{\mF^{\vee}}) \subset \PP_X(\mF^{\vee})$, which can be written as
\[
    S=\mZ(q^*\mE^{\vee}\otimes\of_{\mF^{\vee}}(1))\subset\PP_X(\mF^{\vee})
\] the Enriques surface considered in \cite[Lemma 3]{K18}. Notice that $q^*\mE^{\vee}\otimes\of_{\mF^{\vee}}(1)=\of_{\mF^{\vee}}(1)^{\oplus 3}$, and varying the three-dimensional subspace of global sections of $\of_{\mF^{\vee}}(1)$, one obtains the general Enriques surface. We will show in \cref{thm:main} that $S$ and $D_1(\varphi)$ are in fact isomorphic.

\subsection*{Six-dimensional Fano-host.} In \cite{K18}, in order to produce a six-dimensional Fano-host for $S$, Kuznetsov considered the product $\PP_{X}(\mF^{\vee})\times\PP(W_3)$. The key observation is the isomorphism given by the K\"unneth formula
\begin{equation} \label{eq:kun1}
 H^0(\PP_X(\mF^{\vee}),q^*\mE^{\vee}\otimes\of_{\mF^{\vee}}(1)) \cong H^0(\PP_X(\mF^{\vee})\times\PP(W_3),\of_{\mF^{\vee}}(1)\boxtimes\of_{\PP(W_3)}(1)).\end{equation}

Hence, the same section $\varphi$ from the previous paragraph is associated to a unique general global section $\widetilde\varphi$ of $ H^0(\PP_X(\mF^{\vee})\times\PP(W_3),\of_{\mF^{\vee}}(1)\boxtimes\of_{\PP(W_3)}(1))$. We can then consider its zero locus $V(\widetilde\varphi)=T \subset \PP_X(\mF^{\vee})\times\PP(W_3)$, i.e.
 
\[
    T=\mZ(\of_{\mF^{\vee}}(1)\boxtimes\of_{\PP(W_3)}(1))\subset \PP_X(\mF^{\vee})\times\PP(W_3).
\] 

In \cite[Theorem 4]{K18} is proved that $T$ is a Fano sixfold, described as a stratified projective bundle bundle on $\PP_X(\mF^{\vee})$, with general fiber a $\PP^1$ which jumps to a $\PP^2$-bundle over $S$. From this, it follows that
\[      \DC(T)=\langle\DC(S),\DC(\PP_X(\mF^{\vee})),\DC(\PP_X(\mF^{\vee}))\rangle.
\]

\subsection*{Four-dimensional Fano-host.}
The key idea behind this paper is to consider another variety $Y$ which is defined by the very same $\varphi$, and to relate it with $S$. In fact, as in (\ref{eq:kun1}), one gets another canonical isomorphism

\begin{equation}\label{eq2}
H^0(\PP_X(\mF^{\vee})\times\PP(W_3),\of_{\mF^{\vee}}(1)\boxtimes\of_{\PP(W_3)}(1))\cong H^0(\PP(W_3)\times X,\of_{\PP(W_3)}(1)\boxtimes\mF).
\end{equation} 

Denote with $\widehat\varphi$ the unique global section of $\of_{\PP(W_3)}(1)\boxtimes\mF$ associated to $\widetilde{\varphi}$ via (\ref{eq2}). Therefore, the zero locus $V(\widehat\varphi)=Y \subset \PP(W_3)\times X $ defines a variety 
\[
    Y=\mZ(\of_{\PP(W_3)}(1)\boxtimes\mF)\subset\PP(W_3)\times X,
\]
 which can be rewritten as 
\[
    Y=\mZ(\of(1,0,2)\oplus\of(1,2,0))\subset\PP(W_3)\times\PP(V_3)\times\PP(V_3').
\]

In the following, we prove that $X$ and $Y$ are birational, and how $Y$ can be seen as a Fano-host for $S$.

\begin{thm} \label[thm]{thm:main}
The variety
\[
    Y=\mZ(\of(1,0,2)\oplus\of(1,2,0))\subset\PP(W_3)\times\PP(V_3)\times\PP(V_3')
\] is a Fano fourfold. Moreover, $Y\cong\Bl_{S}(\PP(V_3)\times\PP(V_3'))$, with $S$ a general Enriques surface.
\end{thm}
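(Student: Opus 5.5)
The plan is to identify $Y$ with the variety $Y_1=Y_2$ of \cref{prop:degSwap}(iii), applied to $X=\PP(V_3)\times\PP(V_3')$, $\mE=\of_X\otimes W_3$ (so $e=3$), $\mF=\of_X(2,0)\oplus\of_X(0,2)$ (so $f=2$), and a general $\varphi\colon\mE\to\mF$. Here $e-f=1$, so $\bbGr_X(1,\mE)=\PP_X(\mE)=X\times\PP(W_3)$, and $\of_{\mE}(1)$ is the pullback of $\of_{\PP(W_3)}(1)$. Hence $p^*\mF\otimes\of_{\mE}(1)=\of_{\PP(W_3)}(1)\boxtimes\mF$. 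Under the chain of isomorphisms (\ref{congPD1}), (\ref{eq:kun1}), (\ref{eq2}), the section $\widehat\varphi$ is exactly $\varphi_{\mE}$. So $Y=V(\varphi_{\mE})$, which is the zero locus of a general section of $\of(1,2,0)\oplus\of(1,0,2)$. Since this bundle is globally generated, Bertini shows $Y$ is smooth of dimension $2+4-2=4$.

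Next, \cref{prop:degSwap}(iii) gives $Y\cong\Bl_{\D_1(\varphi)}X$ and that $\D_1(\varphi)$ is birational to $Z=V(\varphi_{\mF^{\vee}})=S$. To upgrade this to an isomorphism, I would check that $\D_0(\varphi)=\varnothing$. By \cref{thm:ddl}, its expected dimension is $4-(3-0)(2-0)=-2<0$, so it is empty for general $\varphi$. Over $\D_1(\varphi)\setminus\D_0(\varphi)=\D_1(\varphi)$, the cokernel of $\varphi$ has rank one, so $q|_Z\colon Z\to\D_1(\varphi)$ is a bijective morphism with reduced fibres $\PP(\ker\varphi_x^T)=\mathrm{pt}$. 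Because $Z$ is a smooth zero locus (Bertini again, as $\of_{\mF^\vee}(1)$ is globally generated) and $\D_1(\varphi)$ is smooth (its singular locus is contained in $\D_0$), this morphism is an isomorphism. By \cite[Lemma 2.1]{OS20}, $\D_1(\varphi)$ is a general Enriques surface, and this identifies it with $S$. The same argument recovers the claim in the text that $S\cong\D_1(\varphi)$.

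For the Fano property I would use adjunction directly. On $\PP(W_3)\times\PP(V_3)\times\PP(V_3')$ we have $K=\of(-3,-3,-3)$, so
\[
K_Y=\of(-3,-3,-3)\otimes\of(2,2,2)|_Y=\of(-1,-1,-1)|_Y,
\]
and $-K_Y$ is the restriction of an ample bundle, hence ample. Equivalently, one can verify condition (b) of \cref{prFanoHost}: $K_X\otimes\det\mE\otimes\det\mF=\of_X(-1,-1)$ and $e-f=1$, which gives $\of(-1,-1,-1)$, anti-ample.

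The main obstacle I expect is the identification step: making sure the birational map $Z\to\D_1(\varphi)$ is really an isomorphism, and that the blow-up description from \cite[Lemma 2.1]{kuznetsovKuchle} applies scheme-theoretically. Both points rest on the codimension and smoothness of $\D_1(\varphi)$ and on $\D_0(\varphi)=\varnothing$. To handle this I would first invoke the standard fact that, for a general section of a globally generated $\mE^\vee\otimes\mF$, the degeneracy locus $\D_r$ is smooth away from $\D_{r-1}$ and of the expected codimension. Then I would show that the scheme-theoretic fibres of $q|_Z$ are reduced points by a local computation in which $\varphi$ has corank exactly one.
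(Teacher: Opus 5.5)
Your proposal is correct and follows essentially the same route as the paper: you identify $Y$ with $V(\varphi_{\mE})\subset\PP_X(\mE)$, take the blow-up description $Y\cong\Bl_{\D_1(\varphi)}X$ from part (iii) of the degeneracy-swap lemma, use $\D_0(\varphi)=\varnothing$ to upgrade the birational map $Z\to\D_1(\varphi)$ to an isomorphism with $S$, and get Fano-ness from adjunction, $K_Y=\of(-1,-1,-1)|_Y$. Your extra justifications fill in steps the paper only asserts: Bertini for the smoothness of $Y$ and $Z$, and the reduced-fibre argument showing the bijective morphism $Z\to\D_1(\varphi)$ between smooth varieties is an isomorphism.
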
  
\begin{proof}

In this proof, in agreement with the above notation, we set $X= \PP(V_3)\times\PP(V_3')$, $\mE= \of_X \otimes W_3$, $\mF=\of_X(2,0) \oplus \of_X(0,2)$.

 As in the previous discussion, we start from a general morphism of vector bundles on $X$,
        $\varphi:\mE\to\mF.$
     From the isomorphism in (\ref{eq:isovb}), denote $\varphi_{\mF^{\vee}}$ the global section of $q^*\mE \otimes \of_{\mF^{\vee}}(1)$ asocciated to $\varphi$.
    \[
        \varphi_{\mF^{\vee}}\in H^0(\PP_{X}(\mF^{\vee}),q^*\mE^{\vee} \otimes \of_{\mF^{\vee}}(1)).
    \]
  
    By \cref{thm:ddl}, $\D_{1}(\varphi)\subset X$ is a smooth surface, because $\D_{0}(\varphi)$ is empty. By \cref{prop:degSwap}(iii) $\D_{1}(\varphi)$ is birational to
    \[
        S=V(\varphi_{\mF^{\vee}})\subset\PP_{X}(\mF^{\vee}).
    \]
    
    $S$ is the Enriques surface $S$ described in \cite[Lemma 3]{K18}. Notice that $\D_{0}(\varphi)$ is empty, thus we conclude that $\D_1(\varphi) \cong S$.

    We now turn our attention to the variety $Y$. The latter is defined as the zero locus of a section of a vector bundle of rank 2 in $\PP^2\times\PP^2\times\PP^2$, thus $Y$ is four-dimensional. By the adjunction formula, 
    \[
        K_Y=K_{\PP^2\times\PP^2\times\PP^2}\otimes\of(1,2,0)\otimes\of(1,0,2)_{|Y}=\of_{\PP^2\times\PP^2\times\PP^2}(-1,-1,-1)_{|Y}.
    \]
    Hence, $Y$ is Fano.

    Notice that since $\mE=W_3\otimes\of_{X}$, then $\PP(W_3)\times X=\PP_X(\mE)$. Thus, as before, if $p\colon\PP_{X}(\mE)\to X$ is the natural projection, then $Y$ can be rewritten as 
    \[
        Y=V(\varphi_{\mE})\subset\PP_{X}(\mE),
    \]
    where $\varphi_{\mE}$ is the global section of $p^*\mF \otimes \of_{\mE}(1)$ associated to $\varphi$ via the isomorphism (\ref{congPD1}).
Therefore, we are in the setting of \cref{prop:degSwap}. By \cref{prop:degSwap}(i), $Y$ is birational to $X$.
In particular, since $f=e-1=2$, and since we have already shown that $D_{1}(\varphi) \cong S$, by \cref{prop:degSwap}(iii) we have the isomorphism
    \[
    Y\cong\Bl_{S}(\PP^2\times\PP^2).\]

\end{proof}

We are now in the position to describe the numerical invariants and the categorical properties of $Y$.

Since $Y$ is a complete intersection, its invariants can be quickly computed using standard exact sequences and the Riemann-Roch theorem.

\begin{corollary}
    The variety $Y$ has the following invariants:
    \[
        e(Y)=21,\ h^0(-K_Y)=27,\ (-K_Y)^4=102.
    \]
    The half-lower Hodge diamond of $Y$ is 
    \[
\begin{array}{cccccccccc}
	
	0 & & 0 & & 13 & & 0 & & 0 \\
	& 0 & & 0 & & 0 & & 0 & \\
	& & 0 & & 3 & & 0 & & \\
	& & & 0 & & 0 & & & \\
	& & & & 1 & & & & \\
\end{array}
\]
Moreover, if $\Lambda\subset H^2(S,\Z)$ is the  rank ten lattice associated to the Enriques surface $S$, the integral cohomology of $Y$ is:
\[
    H^k(Y,\Z)=\begin{cases}
        \Z & \text{ if ~$k=0,\ 8$}\\
        \Z^{\oplus 3} & \text{ if ~$k=2,\ 6$}\\
        \Z^{\oplus 3}\oplus\Lambda\oplus\Z/2\Z & \text{ if ~$k=4$}\\
        0 & \text{otherwise}
    \end{cases}
\]
\end{corollary}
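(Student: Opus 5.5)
The plan is to use the blow-up description $Y\cong\Bl_S X$ with $X=\PP^2\times\PP^2$ from \cref{thm:main} for the topological and Hodge invariants, and the complete intersection description in $P=\PP^2\times\PP^2\times\PP^2$ for the anticanonical invariants.

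\textbf{Cohomology.} For a blow-up of a smooth surface $S$ in a smooth fourfold $X$ (codimension two) we have
\[
H^k(Y,\Z)\cong H^k(X,\Z)\oplus H^{k-2}(S,\Z).
\]
This holds integrally, since the exceptional divisor is the projectivization of the rank two normal bundle. By Künneth, $H^*(X,\Z)$ has ranks $1,2,3,2,1$ in degrees $0,2,4,6,8$ and no torsion. For the Enriques surface we have
\[
H^0(S,\Z)=\Z,\quad H^1=0,\quad H^2(S,\Z)=\Lambda\oplus\Z/2\Z,\quad H^3(S,\Z)=\Z/2\Z,\quad H^4=\Z.
\]
Here $\Lambda$ has rank $10$. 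Adding these gives the following groups for $Y$:
\[
H^2=\Z^{3},\quad H^4=\Z^3\oplus\Lambda\oplus\Z/2\Z,\quad H^6=\Z^2\oplus\Z=\Z^3,\quad H^8=\Z.
\]
The remaining point is $H^5(Y)=H^3(S)=\Z/2\Z$, which does not match the claimed vanishing in odd degrees. I will have to reconcile this; presumably the intended statement records torsion only in $H^4$ via universal coefficients (the $\Z/2$ in $H^3(S)$ is the torsion dual to that in $H_2$). I expect this to be the main subtle point, and I would state it carefully rather than assert $H^5=0$.

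\textbf{Euler number and Hodge numbers.} We get $e(Y)=e(X)+e(S)=9+12=21$. For the Hodge numbers, $h^{p,q}(Y)=h^{p,q}(X)+h^{p-1,q-1}(S)$. Since $p_g(S)=q(S)=0$, the Hodge numbers of $S$ are $h^{0,0}=1$ and $h^{1,1}=10$. This gives
\[
h^{1,1}(Y)=2+1=3,\qquad h^{2,2}(Y)=3+10=13,
\]
and all off-diagonal Hodge numbers vanish, as displayed.

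\textbf{Anticanonical invariants.} By adjunction, as in the proof of \cref{thm:main}, $-K_Y=\of(1,1,1)|_Y$. Let $H_1,H_2,H_3$ denote the hyperplane classes of the three factors. The degree is
\[
(-K_Y)^4=(H_1+H_2+H_3)^4\cdot(H_1+2H_2)(H_1+2H_3)
\]
computed in $P$, where the only nonzero monomial is $H_1^2H_2^2H_3^2=1$. Expanding
\[
(H_1+2H_2)(H_1+2H_3)=H_1^2+2H_1H_3+2H_1H_2+4H_2H_3,
\]
each term pairs with a multinomial coefficient of $(H_1+H_2+H_3)^4$:
\begin{itemize}
\item $H_1^2$ needs $H_2^2H_3^2$, with coefficient $6$;
\item $2H_1H_3$ needs $H_1H_2^2H_3$, with coefficient $12$, giving $24$;
\item $2H_1H_2$ similarly gives $24$;
\item $4H_2H_3$ needs $H_1^2H_2H_3$, with coefficient $12$, giving $48$.
\end{itemize}
The total is $6+24+24+48=102$.

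For $h^0(-K_Y)$, I use the Koszul resolution of $\of_Y$ twisted by $\of(1,1,1)$:
\[
0\to\of(-1,-3,-3)\to\of(0,1,-1)\oplus\of(0,-1,1)\to\of(1,1,1)\to\of_Y(1,1,1)\to0 .
\]
All the left-hand terms have no cohomology: each of $\of(-1,-3,-3)$, $\of(0,1,-1)$ and $\of(0,-1,1)$ has a degree $-1$ factor on some $\PP^2$, so it is acyclic by Künneth. Hence
\[
h^0(-K_Y)=h^0(\of(1,1,1))=27.
\]
Kodaira vanishing, or the same resolution, gives the higher cohomology as zero, consistent with Riemann--Roch.
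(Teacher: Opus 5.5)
Your proposal is correct, and the problem you flag in degree five is an error in the statement, not a gap in your argument. You should not try to reconcile it. Indeed
$H^5(Y,\Z)\cong H^5(\PP^2\times\PP^2,\Z)\oplus H^3(S,\Z)\cong H_1(S,\Z)\cong\Z/2\Z$.
This does not even depend on the blow-up formula. For the closed oriented $8$-manifold $Y$, Poincar\'e duality and the universal coefficient theorem give
$\mathrm{Tors}\,H^5(Y,\Z)\cong\mathrm{Tors}\,H_3(Y,\Z)\cong\mathrm{Tors}\,H^4(Y,\Z)$.
So the $\Z/2\Z$ that the statement itself places in $H^4$ forces $H^5(Y,\Z)\neq 0$. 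There is no reading under which ``$0$ otherwise'' holds at $k=5$. The correct list is the stated one with $H^5(Y,\Z)\cong\Z/2\Z$.

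The remaining invariants ($e(Y)=21$, the Hodge diamond, $(-K_Y)^4=102$, $h^0(-K_Y)=27$) are as claimed, and your derivations of them are right. There is one harmless slip. Since $\det\bigl(\of(1,2,0)\oplus\of(1,0,2)\bigr)=\of(2,2,2)$, the leftmost term of your twisted Koszul complex is $\of(-1,-1,-1)$, not $\of(-1,-3,-3)$. It is still acyclic, so your conclusion stands.

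On the route: the paper gives no details beyond saying that the invariants follow from the complete-intersection description via standard exact sequences and Riemann--Roch. You use that description only for $(-K_Y)^4$ and $h^0(-K_Y)$. For the latter you use Koszul vanishing instead of Riemann--Roch; the Riemann--Roch route needs $c_1^2c_2=108$ to get $\chi(-K_Y)=\tfrac16\cdot 102+\tfrac1{12}\cdot 108+1=27$, which agrees with yours. You take $e(Y)$, the Hodge numbers and the integral cohomology from $Y\cong\Bl_S(\PP^2\times\PP^2)$ instead. This is the more efficient choice. On the complete-intersection side, Lefschetz-type arguments do not apply directly, since $\of(1,2,0)$ and $\of(1,0,2)$ are not ample. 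The blow-up description is also what produces $\Lambda$ and the torsion in the first place, which is exactly how it exposes the missing $\Z/2\Z$ in $H^5$.
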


\begin{corollary} \label[corollary]{cor:main}
    The variety $Y$ is a fourfold with a semiorthogonal decomposition
    \[
        \DC(Y)=\langle\DC(S),E_1,\ldots,E_9\rangle
    \]
    where $E_1,\ldots,E_9$ are exceptional bundles. The Hodge diamond of $Y$ is diagonal, but $K^0(Y)$ contains a 2-torsion class; in particular, $\DC(Y)$ does not have a full exceptional collection.
\end{corollary}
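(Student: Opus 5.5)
Since \cref{thm:main} identifies $Y\cong\Bl_S X$ with $X=\PP(V_3)\times\PP(V_3')$, I will obtain the decomposition from Orlov's blow-up formula. The centre $S$ has codimension $2$, so
\[
    \DC(Y)=\langle \DC(S),\ \pi^*\DC(X)\rangle ,
\]
where $\pi\colon Y\to X$ is the blow-down. The $\DC(S)$ component is embedded as $j_*(\sigma^*(-)\otimes\of_E(E))$, with $E\to S$ the exceptional $\PP^1$-bundle, and the decomposition can be reordered by mutations if a different convention is preferred. The same conclusion also follows from \cref{prFanoHost}, since Kuznetsov's Theorem 8.8 gives $\DC(Y)=\langle\DC(S),p^*\DC(X)\otimes\of_{\mE}(1)\rangle$ because $e-f=1$. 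Next, $X=\PP^2\times\PP^2$ carries the full strong exceptional collection $\of(a,b)$ with $0\le a,b\le 2$, obtained from Beilinson's collection on each factor. Pulling it back along the fully faithful functor $\pi^*$ gives nine exceptional line bundles $E_1,\dots,E_9$ on $Y$, and this proves the semiorthogonal decomposition.

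For the Hodge diamond I will use the blow-up formula: $h^{p,q}(Y)=h^{p,q}(X)+h^{p-1,q-1}(S)$. For $X$, the diamond is diagonal with entries $1,2,3,2,1$. For the Enriques surface, $h^{1,0}=h^{2,0}=0$ and $h^{1,1}=10$. This gives the diagonal $1,3,13,3,1$, which matches the preceding corollary, and every off-diagonal entry vanishes. Diagonality also follows from additivity of Hochschild homology along the decomposition, since $\mathrm{HH}_\bullet(S)$ is concentrated in degree $0$.

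For the torsion statement I will use additivity of $K_0$ along semiorthogonal decompositions, which gives $K_0(Y)\cong K_0(S)\oplus\Z^9$. For an Enriques surface, $\mathrm{Pic}(S)\cong \Z^{10}\oplus\Z/2\Z$, where the torsion is generated by $K_S$. The class $[\of_S]-[K_S^{-1}]$, or equivalently $c_1$ evaluated on the torsion, gives a nonzero $2$-torsion element of $K_0(S)$. To see that it is nonzero, I will use the determinant map $K_0(S)\to\mathrm{Pic}(S)$, which is surjective and sends $[K_S]-[\of_S]$ to $K_S\ne 0$. Its image in $K_0(Y)$ is therefore $2$-torsion and nonzero. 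A full exceptional collection would force $K_0(Y)\cong\Z^{n}$, which is free, so $\DC(Y)$ has no full exceptional collection. This step also agrees with the $\Z/2\Z$ in $H^4(Y,\Z)$ from the preceding corollary.

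I expect the main obstacle to be the $2$-torsion check: one has to see that $[K_S]-[\of_S]$ is genuinely $2$-torsion and nonzero in $K_0(S)$. I would argue this through the $\gamma$-filtration: on a surface, $F^1K_0/F^2K_0\cong\mathrm{Pic}$, and the relation $2([K_S]-[\of_S])\in F^2 K_0(S)\cong CH_0(S)\cong\Z$ (Bloch's conjecture holds for Enriques surfaces) must vanish, as its degree is zero. The remaining parts are formal consequences of \cref{thm:main}.
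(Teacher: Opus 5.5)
Your proposal is correct and follows essentially the same route as the paper. Both arguments use Orlov's blow-up formula for $Y\cong\Bl_S(\PP^2\times\PP^2)$ with the nine-term exceptional collection $\of(a,b)$, $0\le a,b\le 2$, and then additivity of $K_0$ together with the $2$-torsion of $K_0(S)$ to rule out a full exceptional collection. The only difference is that you check by hand (via $\det$ and $CH_0(S)\cong\Z$) that $[K_S]-[\of_S]$ is a nonzero $2$-torsion class, and you read off the diagonal Hodge diamond from the blow-up formula, whereas the paper cites Kuznetsov's Lemma 1 and the preceding corollary for these points.
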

\begin{proof}
    The semiorthogonal decomposition is given by Orlov’s blow-up formula and the fact that $\DC(\PP^2\times\PP^2)$ is generated by an exceptional collection of length nine. 

    The Grothendieck group is additive with respect to semiorthogonal decompositions, hence 
    \[
        K_0(Y)=K_0(S)\oplus\mathbb{Z}^{\oplus 9}.
    \]
    Notice that the 2-torsion in $S$ induces a 2-torsion in $Y$, hence, by \cite[Lemma 1]{K18}, $\DC(Y)$ does not have a full exceptional collection.
\end{proof}
This yields a four-dimensional variety as a solution to the Fano-visitor problem for Enriques surfaces general in moduli.

With \cref{cor:main} we proved that the Fano dimension of $S$ is at most $4$. Observe that neither $\PP^1$ nor a del Pezzo surface can be a Fano-host for $S$. Therefore, if one shows that no Fano threefolds can be a Fano host for $S$, it follows that the Fano dimension of a general Enriques surface is 4. 

Assume by contradiction that $Y$ is a three-dimensional Fano host for $S$. Then we obtain the following inequality on the dimensions of Hochschild homology:
\[
    12=\dim (\text{HH}_0(S))\leq (\dim\text{HH}_0(Y)).
\]
If $\rho(Y)$ denotes the Picard rank of $Y$, we have \[\dim\text{HH}_0(Y)=2+\rho(Y),\] and hence $\rho(Y)\geq 5$. By classification of Fano threefolds, there are only eight families of Fano threefolds with Picard rank $\geq5$, and all of them admit a full exceptional collection. This implies that $K_0(Y)$ is torsion-free, which contradicts the assumption that $K_0(S)\subset K_0(Y)$.

\frenchspacing

\newcommand{\etalchar}[1]{$^{#1}$}

\end{document}